\documentclass{article}
\usepackage[utf8]{inputenc}

\usepackage{amssymb}
\usepackage{amsthm}
\usepackage{amsmath,amscd}
\usepackage[mathscr]{euscript}
\usepackage[all]{xy}
\usepackage{lmodern}
\usepackage[T1]{fontenc}
\usepackage[textwidth=14cm,hcentering]{geometry}
\usepackage[colorlinks=true,linkcolor=red,citecolor=blue]{hyperref}
\usepackage{cleveref}
\usepackage{mathtools}
\usepackage{xspace}
\usepackage{tikz}
\usetikzlibrary{cd}
\usetikzlibrary{arrows,positioning}
\usepackage{bbm}
\usepackage{aliascnt}

\usepackage{etoolbox}
\AtBeginEnvironment{thebibliography}{\setlength{\itemsep}{0pt}}

\title{$\Mot$ is not compactly generated}

\author{Maxime Ramzi}
\date{}

\newtheorem{thm}{Theorem}[section]
\newaliascnt{lm}{thm}  

\aliascntresetthe{lm}
\Crefname{lm}{Lemma}{Lemmas}
\newaliascnt{prop}{thm}  
\newtheorem{prop}[prop]{Proposition}
\aliascntresetthe{prop}
\Crefname{prop}{Proposition}{Propositions}
\newaliascnt{cor}{thm}  
\newtheorem{cor}[cor]{Corollary}
\aliascntresetthe{cor}
\Crefname{cor}{Corollary}{Corollaries}

\newtheorem*{thm*}{Theorem}
\newtheorem*{cor*}{Corollary}

\theoremstyle{definition}
\newaliascnt{defn}{thm}  

\aliascntresetthe{defn}
\Crefname{defn}{Definition}{Definitions}
\newaliascnt{cons}{thm}  

\aliascntresetthe{cons}
\Crefname{cons}{Construction}{Constructions}
\newaliascnt{nota}{thm}  

\aliascntresetthe{nota}
\Crefname{nota}{Notation}{Notations}
\newaliascnt{conv}{thm}  

\aliascntresetthe{conv}
\Crefname{conv}{Convention}{Conventions}
\newaliascnt{ex}{thm}  

\aliascntresetthe{ex}
\Crefname{ex}{Example}{Examples}
\newaliascnt{rmk}{thm}  
\newtheorem{rmk}[rmk]{Remark}
\aliascntresetthe{rmk}
\Crefname{rmk}{Remark}{Remarks}
\newaliascnt{ques}{thm}  

\aliascntresetthe{ques}
\Crefname{ques}{Question}{Questions}
\newaliascnt{conj}{thm}  

\aliascntresetthe{conj}
\Crefname{conj}{Conjecture}{Conjectures}
\newaliascnt{warn}{thm}  
\newtheorem{warn}[warn]{Warning}
\aliascntresetthe{warn}
\Crefname{warn}{Warning}{Warnings}
\newaliascnt{obs}{thm}  

\aliascntresetthe{obs}
\Crefname{obs}{Observation}{Observations}
\newtheorem*{ques*}{Question}
\newtheorem*{rmk*}{Remark}
\newtheorem*{ex*}{Example}
\newaliascnt{recoll}{thm}  

\aliascntresetthe{recoll}
\Crefname{recoll}{Recollection}{Recollections}

\newcommand{\cat}{\mathrm}
\newcommand{\Cat}{\cat{Cat}}
\newcommand{\Catperf}{\mathrm{Cat}^{\mathrm{perf}}}
\newcommand{\on}{\operatorname}

\newcommand{\Fun}{\on{Fun}}

\newcommand{\Sph}{\mathbb S}

\newcommand{\Sp}{\cat{Sp}}

\newcommand{\PrL}{\mathrm{Pr}^\mathrm{L} }

\newcommand{\CAlg}{\mathrm{CAlg}}

\newcommand{\Mod}{\cat{Mod}}

\newcommand{\Mot}{\cat{Mot}^{\mathrm{loc}}}

\newcommand{\HH}{\mathrm{HH}}
\newcommand{\THH}{\mathrm{THH}}

\newcommand{\Ind}{\mathrm{Ind}}

\newcommand{\colim}{\mathrm{colim}}

\newcommand{\U}{\mathcal{U}_{\mathrm{loc}}}

\newcommand{\st}{\mathrm{st}}
\newcommand{\dbl}{\mathrm{dbl}}

\newcommand{\Q}{\mathbb{Q}}

\begin{document}

\maketitle
\begin{abstract}
    We prove that the $\infty$-category of localizing motives in the sense of Blumberg--Gepner--Tabuada is not compactly generated, extending a result of Efimov.  
\end{abstract}
\section*{Introduction}
The category\footnote{Here and throughout, we use ``categories'' to refer to $\infty$-categories.} of localizing motives, $\Mot$, introduced by Blumberg, Gepner and Tabuada \cite{BGT} has become a particularly important tool in the study of algebraic $K$-theory and related invariants. For a long time it has seemed hard to access, but recently Efimov has made spectacular progress in understanding its structural properties.

One key question about this category relates to its compact generation: while $\Catperf$ is compactly generated, the universal localizing invariant $\U:\Catperf\to \Mot$ does not preserve nor reflect compact objects -- this makes the understanding of compact objects in $\Mot$ somewhat subtle. In a breakthrough result, Efimov proved that it is \emph{dualizable}, a certain weakening of ``compactly generated'', earlier studied by Gaitsgory--Rozenblyum \cite{GR} and Efimov \cite{efimov}. This did not resolve the general question of compact generation, but Efimov \emph{did} prove that a relative version, $\Mot_{\Q[x]}$ is \emph{not} compactly generated \cite[Corollary 12.11]{rigidity}. This strongly suggests that the absolute version, $\Mot$, is also not compactly generated. In this paper, we prove that this expectation is correct:
\begin{thm*}\label{thm:main}
    Let $R$ be a connective $\mathbb E_2$-ring spectrum, and suppose that $R_\Q\neq 0$. The category $\Mot_R$ of motives relative to $R$ is not compactly generated. This is in particular the case for $R=\Sph$. 
    \end{thm*}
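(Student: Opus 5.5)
The plan is to deduce the theorem from Efimov's result \cite[Corollary 12.11]{rigidity} by base change. The key observation is that compact generation is inherited by categories of modules. Concretely:

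\emph{Step 1: modules inherit compact generation.} Let $\mathcal C$ be a compactly generated presentably monoidal category and $A\in\mathrm{Alg}(\mathcal C)$. Then $\Mod_A(\mathcal C)$ is compactly generated, by the free modules $A\otimes c$ for $c$ compact in $\mathcal C$. Indeed, the forgetful functor preserves colimits and is conservative, and $\mathrm{Map}(A\otimes c,-)\simeq \mathrm{Map}(c,\mathrm{fgt}(-))$.

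\emph{Step 2: a ring map out of $R$.} From $R$ I would produce a map of $\mathbb E_2$-rings $R\to S$, where $S$ is a ring for which non-compact-generation of $\Mot_S$ is known. Since $R$ is connective, truncation gives an $\mathbb E_2$-map $R\to\pi_0R$, and $\pi_0 R$ is commutative. The hypothesis $R_\Q\neq 0$ forces $\pi_0R\otimes\Q\neq0$. This ring therefore has a residue field $k$ of characteristic $0$, and we obtain $R\to \pi_0R\to k\to k[x]$. The target should be $k[x]$ rather than $\Q[x]$, since in general there is no map $R\to\Q[x]$. So Efimov's argument must be checked, or reproved, for $k[x]$ with $k$ an arbitrary field of characteristic zero. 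I expect his argument to use only that $k$ is a $\Q$-algebra field, for instance via nil-invariance or $\mathbb A^1$-type properties of $K$-theory in characteristic $0$. If that fails, I would try instead to factor through a $\Q$-algebra mapping to $\Q[x]$ after passing to a suitable localization.

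\emph{Step 3: identify relative motives as modules.} For a map $R\to S$ as above, I would show that
\[\Mot_S\simeq \Mod_{\U^R(\mathrm{Perf}(S))}(\Mot_R).\]
The input is $\Catperf_S\simeq\Mod_{\mathrm{Perf}(S)}(\Catperf_R)$, together with the symmetric monoidal, colimit-compatible nature of the construction $\Catperf_R\mapsto \Mot_R$: localizing motives are obtained by inverting localization sequences in a presheaf-type construction, and that construction commutes with base change along $\mathrm{Perf}(S)$. Some care is needed on the monoidal structures, since $R$ is only $\mathbb E_2$, so $\Mot_R$ is only $\mathbb E_1$-monoidal. For Step 1 it suffices that $\mathrm{Perf}(S)$ is an algebra, and that verification sharing a single $\mathbb E_1$-structure should be harmless.

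\emph{Conclusion.} Suppose $\Mot_R$ were compactly generated. By Steps 1 and 3, $\Mot_{k[x]}$ would then be compactly generated, contradicting Step 2. I expect Step 3 to be the main obstacle: showing that relative localizing motives are exactly modules over the image of $\mathrm{Perf}(S)$. The delicate point is whether the localization defining $\Mot$ commutes with passing to $\mathrm{Perf}(S)$-modules. My first attempt would be to use Efimov's description of $\Mot$ as generated by dualizable categories, and to check that the localization sequences are stable under tensoring with $\mathrm{Perf}(S)$. Once that is in place, the case $R=\Sph$ follows immediately, since $\Sph_\Q=\Q\neq0$.
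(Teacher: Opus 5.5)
Step~1 is fine, but there is a genuine gap in Step~3, and a second one in Step~2.

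\textbf{Step 3 is the whole difficulty.} The check you propose does not reach it. Knowing that $\mathrm{Perf}(S)\otimes_R-$ preserves localization sequences gives a base change functor $b\colon \Mot_R\to\Mot_S$. Its right adjoint is the colimit-preserving forgetful functor, with $\mathrm{fgt}\,b(M)\simeq \U^R(\mathrm{Perf}(S))\otimes M$. From this you only get that $\Mod_{\U^R(\mathrm{Perf}(S))}(\Mot_R)$ is the localizing subcategory of $\Mot_S$ \emph{generated by motives of base-changed categories} $\mathrm{Perf}(S)\otimes_R\mathcal B$.

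Identifying this with all of $\Mot_S$ is equivalent to conservativity of $\Mot_S\to\Mot_R$. Equivalently, no nonzero finitary $S$-linear localizing invariant vanishes on every $\mathrm{Perf}(S)\otimes_R\mathcal B$. You give no argument for this, and there is no evident one. For an $S$-linear category such as $\mathrm{Perf}(k)$ with $x\mapsto 0$, the bar resolution $\mathcal A\simeq|\mathrm{Perf}(S)^{\otimes_R(\bullet+1)}\otimes_R\mathcal A|$ is a geometric realization, and $\U^S$ does not preserve geometric realizations.

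Observe also that if Step~3 were available, your Steps~1 and~3 with $\Sph\to\Q[x]$ would turn Efimov's theorem into the case $R=\Sph$ immediately. The paper instead presents Efimov's $\Q[x]$ result only as evidence for the absolute case, which indicates this step is not a known fact.

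\textbf{Step 2 is also unproved.} You need Efimov's theorem over $k[x]$ for an arbitrary characteristic-zero field $k$. It cannot be deduced from the $\Q[x]$ case by your Steps~1 and~3, since these only transport compact generation \emph{upwards} along $\Q[x]\to k[x]$.

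\textbf{How the paper avoids both problems.} It transports in the opposite direction, where only a formal property is needed. Base change $\Mot_R\to\Mot_A$, with $A=\pi_0(R)_\Q$, is an internal left adjoint by rigidity. Hence it preserves compact objects, and therefore preserves $\Ind((\Mot_{-})^\omega)$. No module description or conservativity is needed.

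Rather than invoking non-compact generation abstractly, the paper exhibits an explicit motive that is not Ind-compact:
\begin{itemize}
\item By Mathew's form of Kaledin's degeneration, extended to all dualizable motives via \Cref{cor:lift}, $S^1$ acts \emph{trivially} on $\HH(M/A)$ for compact $M$.
\item Since $A[S^1]$ is compact in $\Mod_A^{BS^1}$, $\HH$ of a filtered colimit of compact motives cannot contain $A[S^1]$ as a summand.
\item The square-zero extension $R\oplus\Sigma R$ has $\HH$ over $R$ containing $R[S^1]$ as a summand. So its base change to $A$ is not Ind-compact, and hence neither is its $R$-linear motive.
\end{itemize}
This works over any nonzero connective rational $A$, so no residue field and no $k[x]$ ever enters.
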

\begin{rmk*}
    As before, this suggests that for no nonzero $\mathbb E_2$-ring spectrum $R$ is $\Mot_R$ compactly generated. Our method does not immediately generalize to nonconnective ring spectra, nor to ring spectra that are supported on $(p)$ for some prime $p$, such as $\mathbb F_p$-algebras. 
\end{rmk*}
Our proof is essentially the observation that a result of Mathew's \cite{akhil} (generalizing earlier results of Kaledin \cite{kaledin,kaledin2}) already implies the non-compact generation. 
\section*{Conventions}
We use freely the theory of $\infty$-categories as developed in \cite{HTT,HA}, except that we use the term ``categories'' for ``$\infty$-categories''. We use standard notation such as $\Sp, \PrL_\st, \Fun^L$. We also use standard notation from classical references on localizing invariants \cite{BGT,efimov}, dualizable categories \cite{Dbl,efimov} and rigid categories \cite{locrig}.  We assume the reader is at least somewhat familiar with these. 
\section{Preliminaries}
First, we recall Efimov's rigidity theorem: 
\begin{thm}[{\cite[Theorem 0.3]{rigidity}}]\label{thm:rigidity}
Let $R$ be a commutative ring spectrum. The category $\Mot_R$ of $R$-linear motives is rigid in the sense of Gaitsgory--Rozenblyum \cite{GR}. In particular, its compact objects agree with its dualizable objects. 

Furthermore, it is dualizable, and in fact self-dual: the following functor is an equivalence: $$\Mot_R\to \Fun^L(\Mot_R,\Sp), M\mapsto K(M\otimes_R -)$$
\end{thm}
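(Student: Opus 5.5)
Since this theorem is quoted from \cite{rigidity}, the paper will simply invoke it. If I had to reprove it, I would follow Efimov's strategy: pass from small categories to dualizable presentable ones, where $\U$ acquires extra continuity, and read rigidity off a structural property of the generators $\U(\mathcal C)$. First, extend localizing invariants from $\Catperf_R$ to the category $\Cat^{\dbl}_R$ of dualizable $R$-linear categories with strongly continuous functors. I would use the fact that every dualizable category is a retract of a compactly generated one, and that short exact sequences (such as the Calkin sequences) are preserved. This produces a continuous $\U^{\mathrm{cont}}$ and identifies $\Mot_R$ with the category of localizing motives built out of $\Cat^{\dbl}_R$.

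The heart of the argument is then to show that each generator $\U(\mathcal C)$, for $\mathcal C$ in $\Catperf_R$, is a sequential colimit of objects along \emph{trace-class} (compact) morphisms in the sense of rigid categories \cite{locrig}. Concretely, I would aim to write $\Ind(\mathcal C)$ in $\Cat^{\dbl}_R$ as a colimit of a sequence of dualizable categories whose transition functors are compact (trace-class) morphisms in $\Cat^{\dbl}_R$. Dualizability of $\Ind(\mathcal C)$ is what allows one to factor its identity through such maps; here I would try Efimov's Calkin/``nuclear'' construction first. Applying $\U^{\mathrm{cont}}$ then expresses $\U(\mathcal C)$ as such a colimit in $\Mot_R$. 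By the standard characterization of rigid categories (generated under colimits by objects that are colimits along trace-class maps, with unit of the same kind and compact when connectivity allows), this yields that $\Mot_R$ is rigid. In particular, $\Mot_R$ is dualizable, and its compact objects coincide with its dualizable ones.

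For self-duality, I would compute mapping spectra using a formula of the form $\mathrm{map}(\U(\mathcal A),\U(\mathcal B))\simeq K^{\mathrm{cont}}$ of the dualizable internal Hom $\Fun^L_R(\Ind\mathcal A,\Ind\mathcal B)$, valid at least for the relevant generators. I would then check that the pairing $(M,N)\mapsto K(M\otimes_R N)$ is the evaluation exhibiting $\Mot_R$ as its own dual; rigidity reduces this to checking that the unit and counit are compatible on generators. I expect the main obstacle to be the trace-class exhaustion of the second paragraph, together with the continuity of $K$ on $\Cat^{\dbl}_R$ that makes those colimit presentations survive applying $\U$. Both are genuinely nontrivial inputs from \cite{efimov,rigidity}.
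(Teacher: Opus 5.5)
The paper gives no argument for this statement. It is quoted from Efimov \cite[Theorem 0.3]{rigidity}, exactly as your first sentence anticipates. Your reconstruction, however, is not a proof, and parts of it are wrong as written.

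\textbf{The central step.} It has no mechanism behind it. A symmetric monoidal functor such as $\U^{\mathrm{cont}}\colon\Cat^{\dbl}_R\to\Mot_R$ sends trace-class maps to trace-class maps, via the comparison $\U^{\mathrm{cont}}(\underline{\mathrm{Hom}}(x,1))\to\underline{\mathrm{Hom}}(\U^{\mathrm{cont}}(x),1)$. So what you need in $\Cat^{\dbl}_R$ are \emph{trace-class} functors. Compact morphisms there are a different notion, and you give no reason $\U^{\mathrm{cont}}$ would preserve them.

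Dualizability of $\Ind(\mathcal C)$ in $\PrL_\st$ does not produce trace-class factorizations in $\Cat^{\dbl}_R$. The identity of $\Ind(\mathcal C)$ is trace-class there iff $\Ind(\mathcal C)$ is dualizable in $\Cat^{\dbl}_R$. Since the evaluation and coevaluation would have to be strongly continuous, this happens iff $\mathcal C$ is smooth and proper. So the trace-class presentation you need is precisely the content of Efimov's theorem, and nothing in your sketch produces it.

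As stated, for every $\mathcal C\in\Catperf_R$, the claim is even false. Take $R=\Q$ and apply the colimit-preserving symmetric monoidal functor $\HH(-/\Q)\colon\Mot_\Q\to\Mod_\Q$. Trace-class maps in $\Mod_\Q$ factor through perfect modules, so this would exhibit $\HH(\mathcal C/\Q)$ as a sequential colimit of perfect $\Q$-modules. That fails for $\mathcal C=\mathrm{Perf}(K)$ with $K/\Q$ of uncountable transcendence degree, since $\HH_0(K/\Q)=K$. You would at least have to restrict to $\omega_1$-compact generators.

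Rigidity also needs the unit to be compact. This holds because $\mathrm{map}(\U(\mathrm{Perf}\,R),-)$ is nonconnective $K$-theory, which commutes with filtered colimits. No connectivity condition enters.

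\textbf{Self-duality.} Your formula
$\mathrm{map}(\U(\mathcal A),\U(\mathcal B))\simeq K^{\mathrm{cont}}(\Fun^L_R(\Ind\mathcal A,\Ind\mathcal B))=K(\mathcal A^{\mathrm{op}}\otimes_R\mathcal B)$
is false in general. Take $\mathcal A=\bigoplus_{\mathbb N}\mathrm{Perf}(R)$ and $\mathcal B=\mathrm{Perf}(R)$. Since $\U$ preserves filtered colimits and finite sums, $\U(\mathcal A)=\bigoplus_{\mathbb N}1$, where $1=\U(\mathrm{Perf}(R))$. So the left side is $\prod_{\mathbb N}K(R)$, while the right side is $\bigoplus_{\mathbb N}K(R)$. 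The formula is correct when $\U(\mathcal A)$ is dualizable with dual $\U(\mathcal A^{\mathrm{op}})$, for instance for smooth and proper $\mathcal A$, but not in general.

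It is also unnecessary. Every rigid $\mathcal E$ is canonically self-dual. The evaluation is $\mathcal E\otimes\mathcal E\xrightarrow{\otimes}\mathcal E\xrightarrow{\mathrm{map}(1,-)}\Sp$, and the coevaluation is the right adjoint of $\otimes$ applied to the unit. Since $\mathrm{map}_{\Mot_R}(1,-)\simeq K$ by corepresentability, this duality is exactly the displayed functor $M\mapsto K(M\otimes_R-)$. No further check on generators is needed once rigidity is known.
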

In fact, Efimov proves this in the generality of a rigid base $\mathcal E$: $\Mot_\mathcal E$ is also rigid. Let us also recall the following folklore result : 
\begin{prop}
    The functor $\Mot_{-}:\CAlg^{\mathrm{rig}}(\PrL_\st)\to \Pr^\dbl_\st$ preserves filtered colimits. 
\end{prop}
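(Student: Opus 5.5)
We take a filtered diagram $i\mapsto \mathcal E_i$ in $\CAlg^{\mathrm{rig}}(\PrL_\st)$ with colimit $\mathcal E$, and show that the canonical comparison functor
$$\Phi\colon \colim_i \Mot_{\mathcal E_i}\to \Mot_{\mathcal E},$$
with the colimit formed in $\Pr^\dbl_\st$, is an equivalence. The strategy is to describe both sides through compact data and reduce to a statement about filtered colimits of small categories.

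\emph{Step 1: describe the source.} Filtered colimits in $\Pr^\dbl_\st$ are computed in $\PrL_\st$; this is standard, since dualizable categories are closed under colimits in $\PrL_\st$ and strongly continuous functors are colimit-stable. Equivalently, a filtered colimit in $\PrL_\st$ is the limit along the right adjoints, which here are again colimit-preserving. So it suffices to compute $\colim_i\Mot_{\mathcal E_i}$ in $\PrL_\st$.

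\emph{Step 2: describe the target.} Recall that $\Mot_{\mathcal E}$ is the localization of $\Fun(\Cat^{\mathrm{dual}}_{\mathcal E}{}^{\omega_1\text{-}op},\Sp)$, or equivalently of presheaves on $\omega_1$-compact $\mathcal E$-linear dualizable categories, at the localizing relations (short exact sequences, and Efimov's continuity relations). Next we use that $\Cat^{\dbl}_{\mathcal E}=\Mod_{\mathcal E}(\Pr^\dbl_\st)$, and that for rigid $\mathcal E$ modules are compatible with filtered colimits of the base. The key claim is
$$\Cat^{\dbl}_{\mathcal E}\simeq \colim_i \Cat^{\dbl}_{\mathcal E_i}$$
at the level of $\omega_1$-compact objects. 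In other words, every $\omega_1$-compact $\mathcal E$-linear dualizable category is base-changed from some $\mathcal E_i$, and mapping spaces are filtered colimits of the mapping spaces at finite stages. The proof uses that $\omega_1$-compact dualizable categories are described by countable data, namely sequential colimits of compact objects. These descend to a finite stage by a compactness argument, since $\mathcal E^{\omega}$, or rather the rigid generators, form the filtered colimit $\colim_i \mathcal E_i^{\omega_1}$.

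\emph{Step 3: conclude.} The localizing relations are stable under base change and generated at finite stages. Presheaf categories convert filtered colimits of small categories into colimits in $\PrL$, and localizations commute with colimits. Together these give that $\Phi$ is an equivalence.

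The main obstacle is Step 2: showing that $\omega_1$-compact dualizable $\mathcal E$-modules and their (strongly continuous) functors descend to some $\mathcal E_i$. This is delicate because rigid categories are not compactly generated, so we cannot argue with compact generators. The first approach would be to use that $\mathcal E$-linear dualizable categories are retracts of $\Ind$ of $\mathcal E^{\omega_1}$-enriched small categories. We would then encode such a category by a countable diagram of compact-morphism data in $\mathcal E$, which is finitary and descends along filtered colimits.
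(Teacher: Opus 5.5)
\textbf{There is a genuine gap, exactly at the step you flagged.} Your key claim in Step 2 says that every $\omega_1$-compact $\mathcal E$-linear dualizable category, and every $\omega_1$-compact object of $\mathcal E$, is base-changed from a single $\mathcal E_i$. This is false for a merely filtered diagram.

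The reason is that countable data may need countably many stages. A compactness argument only descends finitary ($\omega$-compact) data; $\omega_1$-compact data descends only along $\omega_1$-filtered diagrams. Here is a concrete counterexample.
\begin{itemize}
\item[(a)] Take $R_n=\mathbb Z[x_1,\dots,x_n]$ and $R=\colim_n R_n=\mathbb Z[x_1,x_2,\dots]$. Then $\Mod_R\simeq\colim_n\Mod_{R_n}$ in $\CAlg^{\mathrm{rig}}(\PrL_\st)$.
\item[(b)] The $R$-module $\mathbb Z=R/(x_1,x_2,\dots)$ is a sequential colimit of Koszul complexes, hence $\omega_1$-compact.
\item[(c)] It is not of the form $R\otimes_{R_n}N\simeq N\otimes_{\mathbb Z}\mathbb Z[x_{n+1},x_{n+2},\dots]$. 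On the homotopy of the latter, $x_{n+1}$ acts injectively unless $N=0$, whereas on $\mathbb Z$ it acts by zero.
\item[(d)] The same argument handles the $R$-linear category $\Mod_{\mathbb Z}$, with all $x_i\mapsto 0$. It is $\omega_1$-compact in $\Mod_{\Mod_R}(\Pr^\dbl_\st)$, but it is not base-changed from any $\Mod_{R_n}$. Indeed, $x_{n+1}$ acts by zero on all its objects, but not on the free objects $\mathbb Z[x_{n+1},\dots]\otimes c$, $c\neq 0$, of $\Mod_R\otimes_{\Mod_{R_n}}\mathcal C$.
\end{itemize}
Consequences for your argument:
\begin{itemize}
\item[(1)] The categories of $\omega_1$-compact objects do not form the filtered colimit you need, so the presheaf categories in Step 3 do not match.
\item[(2)] The localizing relations among $\omega_1$-compact objects are not generated at finite stages either.
\item[(3)] Your proposed remedy, encoding a category by a countable diagram of compact-morphism data, has the same defect, since countable diagrams are not finitary.
\item[(4)] Over a general rigid base, which need not be compactly generated, there is no evident supply of finitary data presenting $\Mot_{\mathcal E}$ to which a descent argument could apply.
\end{itemize}

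\textbf{How the paper avoids this.} The paper never descends individual objects; it argues with large categories.
\begin{itemize}
\item[(i)] By Stefanich's Proposition 3.2.9 and Proposition 1.62 of the dualizable-categories paper, it suffices to check preservation of filtered colimits in $\widehat{\Cat}^\omega$. These are large categories with filtered colimits, and functors preserving them.
\item[(ii)] There, by Ramzi--Sosnilo--Winges, $\Mot_C$ is the localization of $\Mod_C(\Pr^\dbl_\st)$ at $C$-motivic equivalences.
\item[(iii)] Also $\colim_i\Mod_{C_i}(\Pr^\dbl_\st)\simeq\Mod_C(\Pr^\dbl_\st)$ in $\widehat{\Cat}^\omega$.
\item[(iv)] Comparing universal properties, one must show the following: a filtered-colimit-preserving functor out of $\Mod_C(\Pr^\dbl_\st)$ that inverts base changes of $C_i$-motivic equivalences inverts all $C$-motivic equivalences.
\item[(v)] This holds because any $C$-motivic equivalence $A\to B$ is $\colim_i(C\otimes_{C_i}A\to C\otimes_{C_i}B)$.
\end{itemize}
The point is that objects only need to be \emph{filtered colimits} of objects coming from finite stages, which is true. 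Your argument needs them to come from a single stage, which is false.
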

\begin{proof}
    Efimov gives a proof in \cite[Proposition 5.3]{rigidity}. We provide another one.  By \cite[Proposition 3.2.9]{stefanich} and \cite[Proposition 1.62]{Dbl} it suffices to prove the statement for the functor $\Mot_{-}$ with values in the category $\widehat{\Cat}^\omega$ of categories with filtered colimits and colimit-preserving functors. 

Consider a filtered diagram $C_\bullet: I\to \CAlg^{\mathrm{rig}}(\PrL_\st)$ with colimit $C$. We claim that $\colim_i \Mot_{C_i}$ and $\Mot_{C}$ have the same universal property in $\widehat{\Cat}^\omega$: by the main result of \cite{RSW} (see also Appendix B in \textit{loc. cit.}), the latter is a localization of $\Mod_{C}(\Pr^\dbl_\st)$ at $C$-linear motivic equivalences. Furthermore, by \cite[Proposition 3.2.9]{stefanich}, \cite[Proposition 1.62]{Dbl} and \cite[Corollary 4.8.5.13]{HA}, $\colim_i \Mod_{C_i}(\Pr^\dbl_\st)\simeq \Mod_{C}(\Pr^\dbl_\st)$ in $\widehat{\Cat}^\omega$.

Thus, it suffices to prove the following: if a filtered-colimit-preserving functor $f:\Mod_{C}(\Pr^\dbl_\st)\to D$ is such that for all $i$, the composite $ \Mod_{C_i}(\Pr^\dbl_\st)\to \Mod_{C}(\Pr^\dbl_\st)\to D$ inverts $C_i$-motivic equivalences, then $f$ inverts $C$-motivic equivalences. 

    This is clear: every motivic equivalence $A\to B$ can be written as the colimit $$\colim_i (C\otimes_{C_i}A\to C\otimes_{C_i}B)$$ which are base changed from $C_i$-motivic equivalences.
\end{proof}

The key corollary we will use and combine with Mathew's result is the following: 
\begin{cor}\label{cor:lift}
The functor $(\Mot_{-})^\dbl:\CAlg^{\mathrm{rig}}(\PrL_\st)\to \Catperf$ preserves filtered colimits. 
\end{cor}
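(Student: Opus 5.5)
The plan is to write $(\Mot_{-})^\dbl$ as the composite of $\Mot_{-}:\CAlg^{\mathrm{rig}}(\PrL_\st)\to \Pr^\dbl_\st$, which preserves filtered colimits by the preceding proposition, with the functor $(-)^\dbl:\Pr^\dbl_\st\to\Catperf$. Here $(-)^\dbl$ sends a dualizable category $\mathcal C$ to its full subcategory $\mathcal C^\dbl$ of objects $x$ such that $\hat y(x)\to y(x)$ admits a section; I take this to be the intended meaning of the notation, with the paper's conventions following \cite{Dbl,efimov}. Note that for a rigid $\mathcal E$, these are exactly the dualizable objects of $\Mot_{\mathcal E}$ under \Cref{thm:rigidity}. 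Since the transition functors are strongly continuous, this is a functor. It therefore suffices to show that $(-)^\dbl$ preserves filtered colimits in $\Pr^\dbl_\st$.

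To prove this, I use Efimov's description of filtered colimits in $\Pr^\dbl_\st$ \cite{efimov}. Given a filtered diagram $\mathcal C_\bullet$ with colimit $\mathcal C$, the colimit in $\Pr^\dbl_\st$ agrees with the colimit in $\PrL_\st$. The latter is computed as the limit along the right adjoints, or equivalently via the identification $\Pr^\dbl_\st\simeq$ retracts of compactly generated categories. Concretely, $\mathcal C$ is the localization of $\Ind(\colim_i \mathcal C_i^\omega{}')$ for suitable presentations. A cleaner route is to use the equivalence $\mathcal C\mapsto \mathcal C^\dbl$, with inverse $\mathcal D\mapsto$ the subcategory of $\Ind(\mathcal D)$ generated by ``compact-like'' colimits, as in \cite[Proposition 1.62]{Dbl}. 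The intended statement is that $\Pr^\dbl_\st$ is equivalent to a category of pairs (small idempotent-complete stable category together with a suitable presentation datum). Under this equivalence, filtered colimits are computed levelwise in $\Catperf$ because the defining conditions are finitary.

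Concretely, I will argue as follows. First, there is a canonical comparison functor $\colim_i \mathcal C_i^\dbl\to\mathcal C^\dbl$ in $\Catperf$. For full faithfulness, recall that mapping spectra in $\mathcal C^\dbl$ can be computed via $\hat y$ as maps in $\Ind(\mathcal C^\omega_1)$-type presentations. Compact morphisms are preserved and detected by strongly continuous functors, so the filtered colimit of mapping spectra computes the mapping spectra in the colimit. This is analogous to the compactly generated case, where $\Ind(\colim \mathcal C_i^\omega)$ is the colimit. For essential surjectivity, an object $x\in\mathcal C^\dbl$ is characterized by a section of $\hat y(x)\to y(x)$. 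This section is a finitary datum: it is a compact object of $\Ind(\mathcal C)$ together with a map, and since $\Ind(\mathcal C)$ has compacts equal to $\mathcal C$, which is the filtered colimit of the $\mathcal C_i$ on the level of $\omega_1$-compact objects, the datum descends to some stage $i$. An idempotent-completion argument then handles retracts.

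The main obstacle is making the essential surjectivity step precise: showing that every $x\in\mathcal C^\dbl$, together with its section, comes from some $\mathcal C_i^\dbl$ up to retract. I would first try to reduce this to the compactly generated case. Every dualizable category is a retract in $\Pr^\dbl_\st$ of a compactly generated one, and the diagram can be resolved functorially by $\mathcal C_i\mapsto \Ind(\mathcal C_i^{\omega_1})$. The retraction $\hat y$ is strongly continuous only when $\mathcal C_i$ is dualizable, which holds here. Filtered colimits of compactly generated categories along compact-preserving functors have compacts given by $\colim_i$ of compacts, and $(-)^\dbl$ of a retract is the corresponding retract, so it commutes with the colimit.
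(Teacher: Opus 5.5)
Your reduction matches the paper's. By \Cref{thm:rigidity} the dualizable objects of $\Mot_{\mathcal E}$ are exactly its compact objects, and your condition that $\hat y(x)\to y(x)$ admit a section is just compactness of $x$. So, given the preceding proposition, everything comes down to the fact that $(-)^\omega:\Pr^\dbl_\st\to\Catperf$ preserves filtered colimits. The paper gets this step by citing \cite[Proposition 2.50]{Dbl}. Your attempted proof of that fact does not work.

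The reduction in your last paragraph rests on a false claim: a dualizable $\mathcal C$ is a retract of $\Ind(\mathcal C^{\omega_1})$ only in $\PrL_\st$, not in $\Pr^\dbl_\st$. The strongly continuous functor is the \emph{section} $\hat y$. The retraction $\colim:\Ind(\mathcal C^{\omega_1})\to\mathcal C$ has the Yoneda embedding as right adjoint, which does not preserve filtered colimits. So $\colim$ does not preserve compact objects, and you cannot apply $(-)^\omega$ to the retraction diagram. In fact no such retraction exists in general. If $r:\mathcal D\to\mathcal C$ is strongly continuous with a section $i$ and $\mathcal D$ is compactly generated, write $i(c)$ as a filtered colimit of compacts and apply $r$. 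This exhibits every $c\in\mathcal C$ as a filtered colimit of compact objects of $\mathcal C$. Your argument would therefore show that every dualizable category, $\Mot$ included, is compactly generated, contradicting the very theorem of the paper.

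The direct argument in your third paragraph has a gap at exactly the hard point. Neither $\mathcal C$ nor $\mathcal C^{\omega_1}$ is computed as the naive filtered colimit of the $\mathcal C_i$ (resp.\ $\mathcal C_i^{\omega_1}$), so nothing descends to a finite stage for free. Write $F_i\dashv G_i$ for the structure functors and $F_{ij}\dashv G_{ij}$ for the transition functors. What you get easily from $x\simeq\colim_iF_iG_ix$ is that a compact $x$ is a retract of some $F_i(z)$ with $z\in\mathcal C_i$ arbitrary. But $\mathrm{Map}_{\mathcal C}(F_iz,F_iz)\simeq\mathrm{Map}_{\mathcal C_i}(z,\colim_jG_{ij}F_{ij}z)$ and $z$ is not compact. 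So the idempotent need not be defined at any finite stage, and even if it were, its splitting need not be compact.

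The missing idea is to use compact morphisms:
\begin{enumerate}
\item Since $\hat y_{\mathcal C}F_i\simeq\Ind(F_i)\hat y_{\mathcal C_i}$ for strongly continuous $F_i$, one has $y(x)\simeq\hat y_{\mathcal C}(x)\simeq\colim_i\Ind(F_i)\hat y_{\mathcal C_i}(G_ix)$.
\item Hence $\mathrm{id}_x$ factors as $x\to F_i(z_0)\to F_i(z_1)\to F_i(z_2)\to x$, where the two middle maps are of the form $F_i(c)$ for compact morphisms $c$ of $\mathcal C_i$.
\item Precomposing with a compact morphism makes maps and null-homotopies into $\colim_jG_{ij}F_{ij}(-)$ factor through a finite stage.
\item This produces, at some stage $j$, a compact endomorphism that is idempotent up to homotopy and whose image under $F_j$ is an idempotent with image $x$.
\item Its splitting is then a compact object of $\mathcal C_j$ sent to $x$.
\end{enumerate}
You need either this argument or the citation.
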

\begin{proof}
This follows from rigidity, the previous proposition and \cite[Proposition 2.50]{Dbl}.
\end{proof}
\section{The proof}
We recall the following theorem of Mathew's: 
\begin{thm}\label{thm:Akhil}
    Let $M$ be a dualizable\footnote{Equivalently, compact.} motive over a connective rational $\mathbb E_\infty$-ring $A$. In this case, the $S^1$-action on $\HH(M/A)$ is trivial. 
\end{thm}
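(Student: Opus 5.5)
This is Mathew's result, and I would follow his route: descend the dualizability of $M$ along filtered colimits to a finitely generated (or otherwise small) rational base, and there use the degeneration of the Hochschild-to-cyclic spectral sequence for smooth proper categories.

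\emph{Step 1: reduce to finite type.} Write $A$ as a filtered colimit of connective rational $\mathbb E_\infty$-rings $A_i$, each of finite type over $\Q$. For example, take the $A_i$ to be built from finitely many cells, or take them to be free on finitely many generators, followed by truncations. By \Cref{cor:lift}, applied to the filtered colimit $\Mod_A=\colim_i \Mod_{A_i}$ in $\CAlg^{\mathrm{rig}}(\PrL_\st)$, we get $(\Mot_A)^\dbl\simeq\colim_i(\Mot_{A_i})^\dbl$ in $\Catperf$. So a dualizable $M$ descends to a dualizable $M_i$ over some $A_i$, with $M\simeq A\otimes_{A_i}M_i$. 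Since $\HH$ is a localizing invariant that is symmetric monoidal in the appropriate sense, we have $\HH(M/A)\simeq A\otimes_{A_i}\HH(M_i/A_i)$ $S^1$-equivariantly. It therefore suffices to show the $S^1$-action is trivial on $\HH(M_i/A_i)$.

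\emph{Step 2: dualizable motives and smooth proper categories.} Over the small base $A_i$, I would argue that the dualizable motives form the thick subcategory generated by motives of smooth proper $A_i$-linear categories. I would approach this via the rigidity statement \Cref{thm:rigidity}, together with Efimov's description of compact objects. Triviality of the $S^1$-action is closed under finite colimits, retracts and tensor products, provided we formulate ``trivial'' in a functorial way. The cleanest formulation is that the map $\HH(-/A)^{hS^1}\to\HH(-/A)$ splits naturally, i.e.\ the action is trivialized compatibly. So it suffices to treat smooth proper categories.

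\emph{Step 3: the smooth proper case.} For a smooth proper $A_i$-linear category over a rational base, the action should be trivialized by a Kaledin-type degeneration. I would try to run Kaledin's argument as generalized by Mathew. Rationally the $S^1$-action on a module is determined by Connes' operator $B$ together with higher coherences. One first spreads out to a finitely generated $\mathbb Z$-algebra. Reducing mod $p$ for large $p$, one uses the Frobenius/Cartier-type comparison of $\HH$ with $\THH$ (as in Kaledin) to see that the action is trivial after completing at $p$. Finally, one glues back rationally using that $\HH$ of a smooth proper category is perfect.

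\emph{Main obstacle.} The hard part is Step 3, the genuine degeneration input, together with making ``trivial action'' precise enough to pass through thick closure and base change. I would first try to quote Mathew's theorem directly for the smooth proper case, and do the reduction in Steps 1--2 myself.
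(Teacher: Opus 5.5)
\textbf{There is a genuine gap in Step 2.} Efimov's rigidity theorem only tells you that compact and dualizable objects of $\Mot_{A_i}$ coincide. It gives no description of them in terms of smooth proper categories. The claim that $(\Mot_{A_i})^\omega$ is the thick subcategory generated by motives of smooth proper $A_i$-linear categories is not known, and the paper explicitly says it is not expected. This is exactly the difference between Mathew's $\mathcal N\mathrm{Mot}_A^\omega$, which is built from smooth proper categories and is where his Corollary 4.8 lives, and $(\Mot_A)^\omega$. So your reduction to the smooth proper case assumes away the one thing this statement adds to Mathew's theorem. Your worry about passing through the thick closure is a second, unresolved point: objectwise triviality does not survive cofibers, and a natural splitting of $\HH(-/A)^{hS^1}\to\HH(-/A)$ is not something the degeneration theorem gives you. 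It becomes irrelevant once Step 2 is dropped.

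\textbf{What is missing is that no comparison with smooth proper categories is needed.} In Mathew's proof, smooth properness is used in only one place: to spread the category out over a compact connective $\mathbb E_\infty$-ring $R\to A$. This spreading out is what makes reduction modulo large primes, and the $\THH$/Frobenius input, available. From then on, as the paper observes, the argument runs unchanged for a dualizable motive. So you should do your Step 1 directly with $A\simeq\colim_j R_j$, where the $R_j$ are compact connective $\mathbb E_\infty$-rings, rather than finite type rational rings. \Cref{cor:lift} then descends the dualizable motive $M$ to a dualizable $R_j$-linear motive, and Mathew's argument applies to it verbatim. Note also that your Step 3 spreads out a \emph{category} to a finitely generated $\mathbb Z$-algebra. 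For a motive, this spreading out has to come from \Cref{cor:lift} again, not from the classical argument for smooth proper categories.
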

\begin{proof}
    This ``is'' \cite[Corollary 4.8]{akhil} and the second paragraph of Section 3 in \textit{loc. cit.}. Note that despite the notation, Mathew's $\mathcal N\mathrm{Mot}_A^\omega$ is not $(\Mot_A)^\omega$ in our notation: Mathew only considers motives of smooth and proper $A$-linear categories (which are in particular dualizable, but it is not expected that all dualizable motives are of this form). However, the only place in which Mathew uses that he is dealing with smooth and proper $A$-linear categories as opposed to dualizable $A$-linear motives is to show that they lift to a smooth and proper $R$-linear category where $R$ is a compact connective $\mathbb E_\infty$-algebra. By \Cref{cor:lift}, this is also the case for dualizable motives, and from there on the proof proceeds unchanged. 
\end{proof}
\begin{warn}
    Crucially, ``trivial'' really means ``trivial'', and not something like ``unipotent'' (i.e. in the thick subcategory generated by trivial actions). 
\end{warn}

We are now essentially done: 
\begin{cor}\label{cor:main}
    Let $M$ be a motive over a non-zero connective rational $\mathbb E_\infty$-ring $A$. Suppose $\HH(M/A)\in \Mod_A^{BS^1}$ contains a summand of the form $A[S^1]$. Then $M$ is not in $\Ind((\Mot_A)^\omega)$. In particular, if such an $M$ exists, $\Mot_A$ is not compactly generated.
\end{cor}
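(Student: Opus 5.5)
We argue by contradiction. Suppose $M\simeq \colim_{j\in J} M_j$ is a filtered colimit of compact objects $M_j\in(\Mot_A)^\omega$. By \Cref{thm:rigidity} these are exactly the dualizable motives. The first step is to note that Hochschild homology $\HH(-/A):\Mot_A\to \Mod_A^{BS^1}$ is a localizing invariant that commutes with filtered colimits, so it factors through a colimit-preserving functor out of $\Mot_A$. Hence
$$\HH(M/A)\simeq \colim_j \HH(M_j/A)\quad\text{in } \Mod_A^{BS^1}.$$
By \Cref{thm:Akhil}, each $\HH(M_j/A)$ carries the trivial $S^1$-action, so it lies in the image of the functor $\mathrm{triv}:\Mod_A\to\Mod_A^{BS^1}$. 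This image is closed under filtered colimits, since $\mathrm{triv}$ is restriction along $BS^1\to \ast$ and preserves colimits. Taking the colimit of the underlying diagram in $\Mod_A$, we conclude that $\HH(M/A)\simeq \mathrm{triv}(N)$ for some $N\in\Mod_A$. Here we use \Cref{warn}: "trivial" means literally in the image of $\mathrm{triv}$, which is what makes this closure argument work. One subtlety is that the diagram $j\mapsto \HH(M_j/A)$ need not lift to a diagram of trivial objects functorially. To handle this, I would show instead that the property "the $S^1$-action is trivial" is detected by a colimit-stable invariant. The plan is to use the criterion below, which is linear in $X$ and commutes with filtered colimits.

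The second step is to show that a trivial action cannot have $A[S^1]$ as a retract. Here $A[S^1]=A\otimes\Sph[S^1]$ is the induced (free) module $S^1_+\otimes A$, and the criterion is the norm map, or equivalently Tate construction. For an induced module, $A[S^1]^{tS^1}=0$ and the homotopy fixed points $A[S^1]^{hS^1}\simeq \Sigma A$. For a trivial module $\mathrm{triv}(N)$, we have $N^{hS^1}=N^{BS^1}$ with $\pi_*$ containing $\pi_*N[[u]]$-type information. A cleaner test: for $X=\mathrm{triv}(N)$, the orbit-to-fixed-points comparison, or simply the map $X_{hS^1}\to X$ induced by the action, splits in a way incompatible with $A[S^1]$.

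Concretely, I would use the following test. For trivial $X$, the action map $\Sigma X=\Sigma^\infty_+S^1\otimes X\to X$ (the circle operator $B$ obtained by restricting the action along $S^1_+\to \Sph$ with its reduced part) is nullhomotopic. The restriction of the $S^1$-action to the fundamental class gives a natural transformation $B:\Sigma X\to X$ of underlying $A$-modules. This transformation is natural in $X\in\Mod_A^{BS^1}$ and compatible with colimits, hence $B=0$ on $\HH(M/A)$, since it is a colimit of objects where $B=0$ by naturality. This avoids the lifting issue. On $A[S^1]\simeq A\oplus\Sigma A$, the operator $B$ is the identity $\Sigma A\to\Sigma A$ component, and it is nonzero because $A\neq0$. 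Since $B$ is natural, it is compatible with retractions: if $A[S^1]$ is a summand of $\HH(M/A)$, then $B_{A[S^1]}$ is a retract of $B_{\HH(M/A)}=0$, hence zero, which is a contradiction.

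The final claim follows because compact generation would mean $\Mot_A=\Ind((\Mot_A)^\omega)$. The main obstacle is the functoriality point just addressed. We need the "trivial action" conclusion of \Cref{thm:Akhil} to survive filtered colimits, and this requires passing to a natural, colimit-compatible obstruction like $B$ rather than arguing with the essential image directly. We also need to confirm that $B$ vanishes on trivial actions and is nonzero on $A[S^1]$, which is a direct computation with $\Sph[S^1]\simeq\Sph\oplus\Sigma\Sph$.
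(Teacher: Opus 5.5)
Your proof has a genuine gap at the step where you conclude $B_{\HH(M/A)}=0$ ``since it is a colimit of objects where $B=0$ by naturality.''

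Naturality only gives $B_X\circ\Sigma\iota_j=\iota_j\circ B_{X_j}=0$ for each structure map $\iota_j\colon X_j\to X=\colim_j X_j$. That is, $B_X$ vanishes after restriction to every stage of the colimit. A map out of a filtered colimit with this property can still be a nonzero phantom map. Put differently, the null-homotopies of the individual $B_{X_j}$ need not assemble into a null-homotopy of the diagram in the arrow category. This is exactly the coherence problem you flagged for ``trivial action'', so moving to $B$ does not avoid it.

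The principle is also false in general, not merely unjustified. Over $A=\Q[x]$, a nonzero phantom map (for example a class in $\mathrm{Ext}^1_{\Q[x]}(\Q(x),\Q[x])$) can be used to build an $S^1$-equivariant module with $B\neq 0$ that is a filtered colimit of modules with trivial action. The same example shows that the essential image of $\mathrm{triv}$ is not closed under filtered colimits, so your first instinct was indeed wrong.

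The missing ingredient is compactness of $A[S^1]$, which is the key step in the paper. The object $A[S^1]$ corepresents the forgetful functor $\Mod_A^{BS^1}\to\Mod_A$, which preserves filtered colimits, so $A[S^1]$ is compact. Hence the summand inclusion $i\colon A[S^1]\to\HH(M/A)\simeq\colim_j\HH(M_j/A)$, with retraction $r$, factors through a single $\HH(M_j/A)$. This exhibits $A[S^1]$ as a retract of an object with trivial action.

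Compactness of the underlying module $A\oplus\Sigma A$ in $\Mod_A$ would also suffice. If $i\simeq\iota_j\circ i_j$ on underlying modules, then $B_{A[S^1]}=r\circ B_{\HH(M/A)}\circ\Sigma i=r\circ\iota_j\circ B_{\HH(M_j/A)}\circ\Sigma i_j=0$.

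Once this step is in place, your operator $B$ gives a clean, explicit justification of the paper's terse final step ``this contradicts $A\neq 0$''. Indeed, $B$ vanishes on trivial actions and restricts to an equivalence $\Sigma A\to\Sigma A$ on $A[S^1]$. So your endgame is fine, but the passage from the $M_j$ to $M$ has to go through compactness of $A[S^1]$, not through a colimit-closure property.
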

\begin{proof}
    Suppose $M\simeq \colim_i M_i$ with each $M_i$ compact. By \Cref{thm:rigidity}, each $M_i$ is dualizable and hence $\HH(M_i/A)$ has a trivial $S^1$-action by \Cref{thm:Akhil}. Since $A[S^1]$ is compact as an $A$-module with $S^1$-action, the composite $$A[S^1]\to \HH(M/A)\simeq \colim_i \HH(M_i/A)$$ factors through an $\HH(M_i/A)$ and hence the retraction $\HH(M/A)\to A[S^1]$ provides a retraction of a trivial $S^1$-action onto $A[S^1]$. This contradicts $A\neq 0$. 
\end{proof}
This is already sufficient to prove our main result in the case of an $\mathbb E_\infty$-ring. Since we stated the result for $\mathbb E_2$-rings, we have to add an extra reduction step.
\begin{proof}[Proof of the Theorem]
    Let $R$ be a connective $\mathbb E_2$-ring with $R_\Q\neq 0$. Note that $A=\pi_0(R)_\Q$ is a nonzero rational commutative ring, and hence commutative ring spectrum. Consider the canonical map $f:R\to A$. 

    Consider now the square zero $R$-algebra $S=R\oplus \Sigma R$. By \cite{hesselholt} (see also \cite[Proposition 4.5.1]{raskin}), $\HH(S/R)$ contains $R[S^1]$ as a summand. By \Cref{cor:main}, the $A$-linear motive of $A\otimes_R S$ is not in $\Ind((\Mot_A)^\omega)$. Since $\Mot_R\to \Mot_A$ is an internal left adjoint, it preserves $\Ind((\Mot_{-})^\omega)$, whence we deduce that the $R$-linear motive of $S$ is not in $\Ind((\Mot_R)^\omega)$, which proves the claim. 
\end{proof}
\begin{rmk}
    By considering instead the motive $\mathrm{thh} \in \Mot$, defined by the condition $K(\mathrm{thh}\otimes -)\simeq \THH$ (cf. \Cref{thm:rigidity}), we can obtain a motive $M$ such that $\THH(M)\simeq \Omega \Sph[S^1]$ on the nose (cf. \cite[Corollary 2.2]{EndTHH}), rather than having it as a retract. The argument is simpler with the square zero extension, though. 
\end{rmk}
\section*{Acknowledgements}
Thanks to Phil Pützstück for some comments on a draft.

This research was funded by the Deutsche Forschungsgemeinschaft (DFG, German Research Foundation) – Project-ID 427320536 – SFB 1442, as well as under Germany's Excellence Strategy EXC 2044/2 –390685587, Mathematics Münster: Dynamics–Geometry–Structure.
\newline 
\textbf{AI Disclosure: } No AI was used for this research.



\begin{thebibliography}{Lur12}

\bibitem[AMN18]{KunnethTP}
Antieau, B., Mathew, A., and Nikolaus, T.
\newblock On the Blumberg--Mandell K\"unneth theorem for {TP}.
\newblock \emph{Selecta Mathematica}, 24(5):4555--4576, 2018.

\bibitem[BGT13]{BGT}
Blumberg, A.~J., Gepner, D., and Tabuada, G.
\newblock A universal characterization of higher algebraic {K}-theory.
\newblock \emph{Geometry \& Topology}, 17(2):733--838, 2013.

\bibitem[Efi25a]{efimov}
Efimov, A.~I.
\newblock K-theory and localizing invariants of large categories.
\newblock \href{https://arxiv.org/abs/2405.12169}{arXiv:2405.12169}, 2025.


\bibitem[Efi25b]{rigidity}
Efimov, A.~I.
\newblock Rigidity of the category of localizing motives.
\newblock \href{https://arxiv.org/abs/2510.17010}{arXiv:2510.17010}, 2025.

\bibitem[GR19]{GR}
Gaitsgory, D. and Rozenblyum, N.
\newblock \emph{A study in derived algebraic geometry: Volume {I}: correspondences and duality}, volume~221.
\newblock American Mathematical Society, 2019.

\bibitem[Hes94]{hesselholt}
Hesselholt, L.
\newblock Stable topological cyclic homology is topological {Hochschild} homology.
\newblock \emph{Asterisque}, 226:175--192, 1994.

\bibitem[Kal08]{kaledin}
Kaledin, D.
\newblock Non-commutative {Hodge-to-de Rham} degeneration via the method of {Deligne-Illusie}.
\newblock \emph{Pure and Applied Mathematics Quarterly}, 4(3):785--876, 2008.

\bibitem[Kal16]{kaledin2}
Kaledin, D.
\newblock Spectral sequences for cyclic homology.
\newblock \href{https://arxiv.org/abs/1601.00637}{arXiv:1601.00637}, 2016.

\bibitem[Lur09]{HTT}
Lurie, J.
\newblock \emph{Higher topos theory}.
\newblock Princeton University Press, 2009.

\bibitem[Lur12]{HA}
Lurie, J.
\newblock \emph{Higher algebra}.
\newblock 2017. Available at author's webpage.


\bibitem[Mat20]{akhil}
Mathew, A.
\newblock Kaledin’s degeneration theorem and topological {Hochschild} homology.
\newblock \emph{Geometry \& Topology}, 24(6):2675--2708, 2020.

\bibitem[Ram24a]{Dbl}
Ramzi, M.
\newblock Dualizable presentable {$\infty$}-categories.
\newblock \href{https://arxiv.org/abs/2410.21537}{arXiv:2410.21537}, 2024.

 

\bibitem[Ram24b]{locrig}
Ramzi, M.
\newblock Locally rigid {$\infty$}-categories.
\newblock \href{https://arxiv.org/abs/2410.21524}{arXiv:2410.21524}, 2026.

\bibitem[Ram25a]{EndTHH}
Ramzi, M.
\newblock On endomorphisms of topological {Hochschild} homology.
\newblock \href{https://arxiv.org/abs/2503.04438}{arXiv:2503.04438}, 2025.

\bibitem[RSW25]{RSW}
Ramzi, M., Sosnilo, V., and Winges, C.
\newblock Every motive is the motive of a stable {$\infty$}-category.
\newblock \href{https://arxiv.org/abs/2503.11338}{arXiv:2503.11338}, 2025.

\bibitem[Ras18]{raskin}
Raskin, S.
\newblock On the {Dundas-Goodwillie-McCarthy} theorem.
\newblock \href{https://arxiv.org/abs/1807.06709}{arXiv:1807.06709}, 2018.


\bibitem[Ste25]{stefanich}
Stefanich, G.
\newblock Classification of fully dualizable linear categories.
\newblock \href{https://arxiv.org/abs/2307.16337}{arXiv:2307.16337}, 2025.


\end{thebibliography}

\end{document}